\newtheorem{theorem}{Theorem}[section]
\newtheorem{lemma}[theorem]{Lemma}
\newtheorem{corollary}[theorem]{Corollary}
\theoremstyle{definition}
\newtheorem{definition}[theorem]{Definition}
\theoremstyle{remark}
\newtheorem{remark}[theorem]{Remark}
\numberwithin{equation}{section}
\begin{document}
\setcounter{page}{1}

\title[Etesi $C^*$-algebras]{K-theory of Etesi $C^*$-algebras}

\author[Nikolaev]
{Igor V. Nikolaev$^1$}

\address{$^{1}$ Department of Mathematics and Computer Science, St.~John's University, 8000 Utopia Parkway,  
New York,  NY 11439, United States.}
\email{\textcolor[rgb]{0.00,0.00,0.84}{igor.v.nikolaev@gmail.com}}

\dedicatory{All data are available as part of the manuscript}
%\dedicatory{In memory of Ola Bratteli}

\subjclass[2010]{Primary  46L85; Secondary 57K41.}

\keywords{4-dimensional manifolds, $C^*$-algebras.}

%\date{Received:  August 14, 2015; Revised: yyyyyy; Accepted: zzzzzz.}

\begin{abstract}
We study the $C^*$-algebra $\mathbb{E}_{\mathscr{M}}$ of a smooth 
 4-dimensional manifold $\mathscr{M}$ introduced by G\'abor Etesi.
  It is  proved  that the  $\mathbb{E}_{\mathscr{M}}$ is a stationary  AF-algebra.  
 We calculate the topological and smooth invariants of  $\mathscr{M}$  
 in terms of the K-theory  of the  $C^*$-algebra $\mathbb{E}_{\mathscr{M}}$.
  Using Gompf's Stable Diffeomorphism Theorem,   it is shown that 
   all smoothings of  $\mathscr{M}$
  form a torsion abelian group.  The latter is
  isomorphic  to the Brauer group of a number field 
 associated to the K-theory of  $\mathbb{E}_{\mathscr{M}}$. 
 \end{abstract}

\maketitle

%**************************************************************************
\section{Introduction}
%***************************************************************************
Let $\mathscr{M}$ be a topological  4-dimensional manifold.
Unlike dimensions 2 and 3, the smooth structures on  $\mathscr{M}$  are
detached from the topology of  $\mathscr{M}$. 
Due to the works of Rokhlin, Freedman and Donaldson, it is known that
 $\mathscr{M}$ can be  non-smooth and    if there exists a smooth structure, it needs not be unique. 
The classification of all smoothings of $\mathscr{M}$  is an open problem. 
Denote by $\mathit{Diff}(\mathscr{M})$ 
a group of the orientation-preserving diffeomorphisms of $\mathscr{M}$
and let $\mathit{Diff}_0(\mathscr{M})$ be a connected component of $\mathit{Diff}(\mathscr{M})$
containing the identity.  The group  $G:=\mathit{Diff}(\mathscr{M})/ \mathit{Diff}_0(\mathscr{M})$
is discrete and therefore locally compact. 
%******************************************************************************************
\begin{definition}\label{dfn1.1}
The Etesi $C^*$-algebra  %$\mathcal{E}_{\mathscr{M}}$ 
is  a group $C^*$-algebra $C^*(G)$ of the locally compact amenable group 
$G$;   see Remarks \ref{rmk1.2} and \ref{rmk4.1}.
\end{definition}
%******************************************************************************************
%*******************************************************************************
\begin{remark}\label{rmk1.2}
$G$ is a countable, discrete, amenable group acting on $\mathscr{M}$ and 
the action  admits a faithful $G$-invariant Borel probability measure, e.g. by taking
the Lebesgue measure of the orbit space of $G$.   By Schafhauser's criterion,  the trivial crossed product $C^*$-algebra
 $\mathbf{C}\rtimes G\cong C^*(G)$ embeds into a unique  simple unital 
  Approximately  Finite-dimensional  (AF-)  $C^*$-algebra % $\mathbb{E}_{\mathscr{M}}$ 
[Schafhauser 2020] \cite[Theorem C]{Sch1}, see also Lemma \ref{lm3.1} for an explicit construction.
  By an abuse of notation, we call the latter
  an (AF-) Etesi $C^*$-algebra and denote it by $\mathbb{E}_{\mathscr{M}}$. 
  \end{remark}
%****************************************************************************
The aim of our note is a classification of the smooth structures on $\mathscr{M}$ 
based on  the K-theory of the Etesi $C^*$-algebra $\mathbb{E}_{\mathscr{M}}$. 
To formalize our results, recall that the AF $C^*$-algebra is 
an inductive limit of the multi-matrix 
$C^*$-algebras $M_{n_1}(\mathbf{C})\oplus\dots\oplus M_{n_k}(\mathbf{C})$
[Blackadar 1986] \cite[Section 7]{B}.  The AF-algebra is called 
stationary, if the inductive limit depends on  a single positive integer matrix 
$A\in GL(n, \mathbf{Z})$ [Blackadar 1986] \cite[Section 7.2]{B} or  \cite[Section 3.5.2]{N}.  
Our main result can be formulated as follows.  
%*****************************************************************************
\begin{theorem}\label{thm1.3}
The %Etesi 
$C^*$-algebra $\mathbb{E}_{\mathscr{M}}$ is a stationary AF-algebra. 
\end{theorem}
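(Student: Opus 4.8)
The plan is to reduce Theorem \ref{thm1.3} to a statement about the discrete group $\Gamma := \mathit{Diff}(\mathscr{M})/\mathit{Diff}_0(\mathscr{M})$ and then to write down an explicit Bratteli diagram for $\mathbb{E}_{\mathscr{M}}$. First I would use Definition \ref{dfn1.1} (and the Remark) to identify $\mathbb{E}_{\mathscr{M}}=C^*(\Gamma)$, the full group $C^*$-algebra of the countable discrete group $\Gamma$, and recall two standard facts. (i) If a discrete group $G$ is an increasing union $G=\bigcup_n G_n$ of \emph{finite} subgroups, then, since the full group $C^*$-algebra functor is continuous under direct limits, $C^*(G)=\varinjlim_n C^*(G_n)$, and each $C^*(G_n)=\bigoplus_{\rho\in\widehat{G_n}}M_{\deg\rho}(\mathbf{C})$ is finite dimensional, so $C^*(G)$ is AF. (ii) The connecting $*$-homomorphisms $C^*(G_n)\hookrightarrow C^*(G_{n+1})$ are governed, via Frobenius reciprocity, by the restriction multiplicity matrices $A_n=\big(\langle\mathrm{Res}^{G_{n+1}}_{G_n}\sigma,\ \rho\rangle\big)_{\rho,\sigma}$, and the Bratteli diagram of the limit is the concatenation of the $A_n$. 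Hence it suffices to exhibit an exhaustion of $\Gamma$ by finite subgroups for which the matrices $A_n$ are eventually a single fixed primitive nonnegative integer matrix $A$ (up to the harmless telescoping moves and simple automorphisms of the diagram).

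Second, I would build such an exhaustion from the geometry of $\mathscr{M}$. A smooth compact $4$-manifold admits a finite handle decomposition, and the relevant diffeotopy classes are already detected on a filtration of $\mathscr{M}$ (respectively on the associated intersection/characteristic data); writing $\Gamma=\bigcup_n\Gamma_n$ along this filtration exhibits each $\Gamma_n$ as finite and yields the AF structure. If one instead starts from Etesi's original unitary representation $\pi$ of $\mathit{Diff}(\mathscr{M})$ as in \cite[Section 2]{Ete1}, the same argument applies after noting that $\pi$ is trivial on $\mathit{Diff}_0(\mathscr{M})$ and on a cofinal family of torsion-free parts, so that $C^*_\pi$ is exactly the AF algebra just described; this is the sense in which Definition \ref{dfn1.1} and \cite{Ete1} agree. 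At this point $\mathbb{E}_{\mathscr{M}}$ is an AF algebra with a concrete Bratteli diagram.

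Third — and here lies the real content — I would prove that the diagram is \emph{stationary}. The mechanism I expect is self-similarity of $\mathscr{M}$ under the stabilization $\mathscr{M}\rightsquigarrow \mathscr{M}\#(S^2\times S^2)$, equivalently under connected sum with a fixed building block: passing from level $n$ to level $n+1$ of the exhaustion corresponds to one such stabilization, and, by Wall-type stable-diffeomorphism results, its effect on diffeotopy classes — hence on the multiplicity matrix — is independent of $n$. This forces $A_n=A$ for a fixed matrix $A\in GL(n,\mathbf{Z})$ with nonnegative entries, and identifies $\mathbb{E}_{\mathscr{M}}=\varinjlim\big(M_{n_1}(\mathbf{C})\oplus\dots\oplus M_{n_k}(\mathbf{C})\ \xrightarrow{A}\ M_{n_1}(\mathbf{C})\oplus\dots\oplus M_{n_k}(\mathbf{C})\ \xrightarrow{A}\ \cdots\big)$, a stationary AF algebra in the sense of \cite[Section 7.2]{B} and \cite[Section 3.5.2]{N}.

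The main obstacle is the control of $\Gamma=\pi_0\mathit{Diff}(\mathscr{M})$: a priori this group need not be locally finite (it can contain arithmetic subgroups), so the reduction in the first paragraph genuinely depends on the specific features of Etesi's representation that force the relevant quotient of $\Gamma$ to be a direct limit of finite groups, and then on the verification that the stabilization block acts by one and the same matrix $A$. I would isolate these two points as the technical core; the remaining steps (finite dimensionality of $C^*(\Gamma_n)$, continuity of the full group $C^*$-algebra functor, and recognition of the stationary inductive limit) are routine.
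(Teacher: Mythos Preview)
Your approach diverges from the paper's in both halves. For the AF part, you try to realize $\Gamma$ itself as a direct limit of finite subgroups, whereas the paper passes to the profinite completion $\widehat{\Gamma}=\varprojlim \Gamma/N$, observes that $C^*(\widehat{\Gamma})$ is AF (each $\Gamma/N$ being finite, so $\mathbf{C}[\Gamma/N]$ is multi-matrix), and then uses the canonical map $\Gamma\to\widehat{\Gamma}$ together with a Handelman extension lemma to conclude that $C^*(\Gamma)$ is AF. For stationarity, you appeal to self-similarity under $\#(S^2\times S^2)$ to force all connecting matrices $A_n$ to coincide; the paper instead uses Piergallini's $4$-fold branched cover $\mathscr{M}\to S^4$ to produce an inclusion $\mathbb{E}_{S^4}\cong\mathbf{C}\hookrightarrow\mathbb{E}_{\mathscr{M}}$, hence $\mathbf{Z}\subset\tau_*(K_0(\mathbb{E}_{\mathscr{M}}))$, argues that this forces the trace image into a number field $K$, and invokes Handelman's characterization of stationary AF-algebras by that property.

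Your own diagnosis of the obstacle is accurate, and it is fatal for the plan as written. The group $\pi_0\,\mathit{Diff}(\mathscr{M})$ is in general \emph{not} locally finite: for many simply connected $\mathscr{M}$ it surjects onto, or contains, the arithmetic group of isometries of the intersection form on $H_2$, which has elements of infinite order. So there is no exhaustion by finite subgroups, and your step (i) cannot start. You suggest that Etesi's representation might kill the torsion-free part and leave a locally finite quotient, but nothing in the outline establishes this, and it is precisely the point that needs proof. The paper's profinite-completion route sidesteps this entirely, and its stationarity argument is a K-theory/trace argument rather than a combinatorial identification of Bratteli matrices. Your stabilization idea does appear in the paper, but later (Corollary \ref{cor3.6} and Lemma \ref{lm3.7}) and for a different purpose: to show that $\mathbb{E}_{\mathscr{M}}$ and $\mathbb{E}_{\mathscr{M}'}$ are Morita equivalent for distinct smoothings, not to prove stationarity of a single $\mathbb{E}_{\mathscr{M}}$.
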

%******************************************************************************

\medskip
Let $\lambda_A>1$ be the Perron-Frobenius eigenvalue   of  the positive matrix $A$ 
defined by  $\mathbb{E}_{\mathscr{M}}$.
Consider a number field  $K=\mathbf{Q}(\lambda_A)$.   
The eigenvector $(v_1,\dots, v_n)$ corresponding to $\lambda_A$ can always be  scaled 
so that $v_i\in K$.  By $\mathfrak{m}:=\mathbf{Z}v_1+\dots+\mathbf{Z}v_n$ we understand 
 a $\mathbf{Z}$-module in the field  $K$ and 
 by  $\Lambda$  the ring of  endomorphisms of $\mathfrak{m}$. 
Let  $[\mathfrak{m}]$ be an ideal class of $\mathfrak{m}$  in the ring $\Lambda$.  
The K-theory of stationary  AF-algebras   says  that the 
 triples $(\Lambda, [\mathfrak{m}], K)$ in a one-to-one  correspondence with the Morita equivalence classes of  
 $\mathbb{E}_{\mathscr{M}}$ [Handelman 1981] \cite{Han2}, see also  \cite[Theorem 3.5.4]{N}. 
Let  $Br(K)$ be the Brauer group of the number field $K$,  i.e. a torsion abelian group 
of  the Morita equivalence  classes of the central simple algebras over $K$. 
By $S(k)$ we understand  the  connected sum of $k$ copies of $S^2\times S^2$. 
We assign an index $k\ge 0$ to each smoothing $\mathscr{M}_k$ of a topological 
4-manifold $\mathscr{M}_{top}$ using  Gompf's Stable Diffeomorphism Theorem, i.e. 
a diffeomorphism 
$\mathscr{M}_k\# S(k)\rightarrow \mathscr{M}_0\# S(k)$,
 where $\mathscr{M}_0$ is the standard smoothing of $\mathscr{M}_{top}$ 
 [Gompf 1984] \cite[Theorem 1]{Gom1}.
The sum of $\mathscr{M}_{k_1}$ and $\mathscr{M}_{k_2}$ is defined 
by the formula $\mathscr{M}_{k_1}\oplus\mathscr{M}_{k_2}:=\mathscr{M}_{k_1+k_2}$,
see corollary \ref{cor2.7}. 
An application of theorem \ref{thm1.3} is as follows.
%*****************************************************************************
\begin{corollary}\label{cor1.4}
Let $\mathscr{M}$ be a smooth 4-manifold, such that the $\mathbb{E}_{\mathscr{M}}$
is an infinite-dimensional $C^*$-algebra.  The following is true:

\bigskip
(i) Handelman triple $(\Lambda, [\mathfrak{m}], K)$ is an invariant 
of the homeomorphisms  of $\mathscr{M}$;

\medskip
(ii) Elements of the Brauer group $Br(K)$ parametrize
smooth structures on $\mathscr{M}_{top}$.

\bigskip
In particular, all  smoothings  of $\mathscr{M}$
 form  a torsion abelian group under the group operation (\ref{eq2.5}) with the neutral element  $\mathscr{M}_0$. 
\end{corollary}
%******************************************************************************

\bigskip
%******************************************************************************************
\begin{remark}
The Brauer group $Br(K)$ is known to classify the division algebras over $K$. 
In other words,  corollary \ref{cor1.4} (ii) defines a functor from the smooth 4-manifolds 
to the division algebras. Such a functor was  constructed independently 
using the Galois theory for non-commutative fields \cite{Nik1}. 
\end{remark}
%******************************************************************************************
The article is organized as follows.  Some preliminary facts can be found in Section 2. 
Theorem \ref{thm1.3} and corollary \ref{cor1.4}  are proved in Section 3. 
We conclude by remarks in Section 4. 

%**************************************************************************
\section{Preliminaries}
%***************************************************************************
In this section we briefly review  the $C^*$-algebras, their K-theory and the 4-dimensional manifolds.
We refer the reader to  [Blackadar 1986] \cite{B},  [Dixmier 1977]  \cite{D} and [Gompf 1984] \cite{Gom1} for the details.

%**************************************************************************
\subsection{$C^*$-algebras}
%***************************************************************************
The $C^*$-algebra is an algebra  $\mathscr{A}$ over $\mathbf{C}$ with a norm 
$a\mapsto ||a||$ and an involution $\{a\mapsto a^* ~|~ a\in \mathscr{A}\}$  such that $\mathscr{A}$ is
complete with  respect to the norm, and such that $||ab||\le ||a||~||b||$ and $||a^*a||=||a||^2$ for every  $a,b\in \mathscr{A}$.  
Each commutative $C^*$-algebra is  isomorphic
to the algebra $C_0(X)$ of continuous complex-valued
functions on some locally compact Hausdorff space $X$. 
Any other  algebra $\mathscr{A}$ can be thought of as  a noncommutative  
topological space. 

An {\it AF-algebra}  (Approximately Finite-dimensional $C^*$-algebra) is defined to
be the  norm closure of an ascending sequence of   finite dimensional
$C^*$-algebras $M_n$,  where  $M_n$ is the $C^*$-algebra of the $n\times n$ matrices
with entries in $\mathbf{C}$. Here the index $n=(n_1,\dots,n_k)$ represents
the  semi-simple matrix algebra $M_n=M_{n_1}\oplus\dots\oplus M_{n_k}$.
The ascending sequence mentioned above  can be written as 
%***********************************************************
\begin{equation}\label{eq2.1}
M_1\buildrel\rm\varphi_1\over\longrightarrow M_2
   \buildrel\rm\varphi_2\over\longrightarrow\dots,
\end{equation}
%****************************************************
where $M_i$ are the finite dimensional $C^*$-algebras and
$\varphi_i$ the homomorphisms between such algebras.  
If $\varphi_i=Const$, then the AF-algebra $\mathscr{A}$ is called 
{\it stationary}. 

The homomorphisms $\varphi_i$ can be arranged into  a graph as follows. 
Let  $M_i=M_{i_1}\oplus\dots\oplus M_{i_k}$ and 
$M_{i'}=M_{i_1'}\oplus\dots\oplus M_{i_k'}$ be 
the semi-simple $C^*$-algebras and $\varphi_i: M_i\to M_{i'}$ the  homomorphism. 
One has  two sets of vertices $V_{i_1},\dots, V_{i_k}$ and $V_{i_1'},\dots, V_{i_k'}$
joined by  $a_{rs}$ edges  whenever the summand $M_{i_r}$ is contained in  $a_{rs}$
copies of the summand $M_{i_s'}$ under the embedding $\varphi_i$. 
As $i$ varies, one obtains an infinite graph called the   Bratteli diagram of the
AF-algebra.  The matrix $A=(a_{rs})$ is known as  a  partial multiplicity matrix;
an infinite sequence of $A_i$ defines a unique AF-algebra.
If   $\mathscr{A}$ is a stationary AF-algebra, then   $A_i=Const$
for all $i\ge 1$.

%**************************************************************************
\subsection{K-theory of AF-algebras}
%***************************************************************************
By $M_{\infty}(\mathscr{A})$ 
one understands the algebraic direct limit of the $C^*$-algebras 
$M_n(\mathscr{A})$ under the embeddings $a\mapsto ~\mathbf{diag} (a,0)$. 
The direct limit $M_{\infty}(\mathscr{A})$  can be thought of as the $C^*$-algebra 
of infinite-dimensional matrices whose entries are all zero except for a finite number of the
non-zero entries taken from the $C^*$-algebra $\mathscr{A}$.
Two projections $p,q\in M_{\infty}(\mathscr{A})$ are equivalent, if there exists 
an element $v\in M_{\infty}(\mathscr{A})$,  such that $p=v^*v$ and $q=vv^*$. 
The equivalence class of projection $p$ is denoted by $[p]$.   
We write $V(\mathscr{A})$ to denote all equivalence classes of 
projections in the $C^*$-algebra $M_{\infty}(\mathscr{A})$, i.e.
$V(\mathscr{A}):=\{[p] ~:~ p=p^*=p^2\in M_{\infty}(\mathscr{A})\}$. 
The set $V(\mathscr{A})$ has the natural structure of an abelian 
semi-group with the addition operation defined by the formula 
$[p]+[q]:=\mathbf{diag}(p,q)=[p'\oplus q']$, where $p'\sim p, ~q'\sim q$ 
and $p'\perp q'$.  The identity of the semi-group $V(\mathscr{A})$ 
is given by $[0]$, where $0$ is the zero projection. 
By the $K_0$-group $K_0(\mathscr{A})$ of the unital $C^*$-algebra $\mathscr{A}$
one understands the Grothendieck group of the abelian semi-group
$V(\mathscr{A})$, i.e. a completion of $V(\mathscr{A})$ by the formal elements
$[p]-[q]$.  The image of $V(\mathscr{A})$ in  $K_0(\mathscr{A})$ 
is a positive cone $K_0^+(\mathscr{A})$ defining  the order structure $\le$  on the  
abelian group  $K_0(\mathscr{A})$. The pair   $\left(K_0(\mathscr{A}),  K_0^+(\mathscr{A})\right)$
is known as a dimension group of the $C^*$-algebra $\mathscr{A}$. 
The scale $\Sigma(\mathscr{A})$ is the image in $K_0^+(\mathscr{A})$
of the equivalence classes of projections in the $C^*$-algebra $\mathscr{A}$. 
The $\Sigma(\mathscr{A})$ is a generating, hereditary and directed subset 
of  $K_0^+(\mathscr{A})$, i.e. (i) for each $a\in K_0^+(\mathscr{A})$ 
there exist $a_1,\dots, a_r\in\Sigma(\mathscr{A})$ such that 
$a=a_1+\dots+a_r$; (ii) if $0\le a\le b\in \Sigma(\mathscr{A})$, then $a\in\Sigma(\mathscr{A})$
and (iii) given $a,b\in\Sigma(\mathscr{A})$ there exists $c\in\Sigma(\mathscr{A})$,
such that $a,b\le c$.   Each  scale  can always be written as 
$\Sigma(\mathscr{A})=\{a\in K_0^+(\mathscr{A}) ~|~0\le a\le u\}$,
where $u$ is an  order unit of  $K_0^+(\mathscr{A})$.  
The pair  $\left(K_0(\mathscr{A}),  K_0^+(\mathscr{A})\right)$ and the
triple  $\left(K_0(\mathscr{A}),  K_0^+(\mathscr{A}), \Sigma(\mathscr{A})\right)$
are invariants of the Morita equivalence and isomorphism class of the 
$C^*$-algebra $\mathscr{A}$, respectively. 
If  $\mathbb{A}$ is an AF-algebra, then its scaled dimension group 
(dimension group, resp.) is a complete invariant of the isomorphism 
(Morita equivalence, resp.) class of $\mathbb{A}$, see e.g. \cite[Theorem 3.5.2]{N}.

Let $\tau$ be the canonical trace on the AF-algebra  $\mathbb{A}$. 
Such a trace induces a homomorphism $\tau_*: K_0(\mathbb{A})\to\mathbf{R}$
and  we let $\mathfrak{m}:=\tau_*( K_0(\mathbb{A}))\subset\mathbf{R}$.  
If    $\mathbb{A}$   is the stationary AF-algebra given by a matrix $A\in GL(n, \mathbf{Z})$, 
then $\mathfrak{m}$ is a $\mathbf{Z}$-module 
in the number field $K=\mathbf{Q}(\lambda_A)$ generated by the Perron-Frobenius 
eigenvalue $\lambda_A$  of  the matrix $A$.    The endomorphism ring of $\mathfrak{m}$
is denoted by $\Lambda$ and the ideal class of $\mathfrak{m}$ is denoted by $[\mathfrak{m}]$. 
The triple  $(\Lambda, [\mathfrak{m}], K)$
is an invariant of the Morita equivalence class of $\mathbb{A}$  [Handelman 1981] \cite{Han2}. 
%**************************************************************************************************
\begin{remark}\label{rmk2.1}
Each  stationary AF-algebra defines  a torsion abelian group.
Indeed, let $?_n(x)$ be the $n$-dimensional Minkowski question-mark function, see
 [Minkowski 1904] \cite[p.172]{Min1} for $n=2$ and [Panti 2008] \cite[Theorem 3.5]{Pan1} for $n\ge 2$. 
The $?_n(x): [0,1]^{n-1}\to [0,1]^{n-1}$ is a continuous function with the following properties:
(i) $?_n(\mathbf{0})=\mathbf{0}$ and $?_n(\mathbf{1})=\mathbf{1}$, where $\mathbf{0}=(0,\dots,0)$
and  $\mathbf{1}=(1,\dots,1)$; (ii)  $?_n(\mathbf{Q}^{n-1})=(\mathbf{Z}[{1\over 2}])^{n-1}$ are dyadic rationals
and  (iii) $?_n(\mathcal{K}^{n-1})=(\mathbf{Q}-\mathbf{Z}[{1\over 2}])^{n-1}$, where $\mathcal{K}$ are algebraic 
numbers of degree $n$ over $\mathbf{Q}$. It is not hard to see, that (iv)   $?_n(\Delta)=\Delta$ is a monotone function,
where $\Delta=[0,1]$  is the normalized diagonal of the simplex $[0,1]^{n-1}$. 
Recall that $\tau_*(K_0(\mathbb{A}))=\mathfrak{m}$ and  $\tau_*(\Sigma(\mathbb{A}))=\mathfrak{m}\cap [0,1]$,
where $\tau$ is the canonical trace on the AF-algebra $\mathbb{A}$ and $\mathfrak{m}$ is a $\mathbf{Z}$-module
in the number field $K$. We assume that  $\tau_*(K_0(\mathbb{A}))\subset\Delta$. 
By the properties (iii) and (iv) of the Minkowski question-mark function, one gets the following
inclusion:
%*******************************************************************************************
\begin{equation}\label{eq2.2}
\mathscr{Y}:=?_n(\tau_*(\Sigma(\mathbb{A})))\subset \mathbf{Q}/\mathbf{Z}. 
\end{equation}
%*********************************************************************************
\end{remark}
%************************************************************************************************
%****************************************************************************************
\begin{definition}\label{dfn2.2}
By the Minkowski group $Mi ~(K)$ of  stationary AF-algebra  we understand 
a torsion abelian group generated by the elements of  set  $\mathscr{Y}$. 
\end{definition}
%****************************************************************************************

%**************************************************************************
\subsection{4-dimensional manifolds}
%***************************************************************************
We denote by $\mathscr{M}$ a smooth 4-dimensional manifold and always 
assume $\mathscr{M}$ to be compact. 
Let $S^4$ be the 4-dimensional sphere and $X_g$ be a closed
2-dimensional orientable surface of genus  $g\ge 0$.
By the knotted surface  $\mathscr{X}:=X_{g_1}\cup\dots X_{g_n}$ in  $\mathscr{M}$  one understands  a 
transverse immersion of a collection of $n\ge 1$  surfaces  $X_{g_i}$ into $\mathscr{M}$. 
We  refer to $\mathscr{X}$ a  surface knot  if $n=1$ and a  surface link  if $n\ge 2$. 
%***************************************************************************************
\begin{theorem}\label{thm2.2}
{\bf ([Piergallini 1995] \cite{Pie1})} 
Each smooth 4-dimensional  manifold $\mathscr{M}$ is the 4-fold PL cover of the sphere $S^4$
 branched at the points of a knotted surface $\mathscr{X}\subset S^4$. 
\end{theorem}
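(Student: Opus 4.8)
The plan is to recast the statement in the language of branched coverings and their monodromy, and then to simplify the branch surface by a sequence of local moves. Recall that a simple $d$-fold branched covering $p\colon\mathscr{M}\to S^4$ whose branch locus is an immersed surface $\mathscr{X}$ is completely encoded by a transitive monodromy homomorphism $\omega\colon\pi_1(S^4\setminus\mathscr{X})\to S_d$ carrying every meridian of $\mathscr{X}$ to a transposition of the symmetric group $S_d$; conversely, such a coloured surface $(\mathscr{X},\omega)$ reconstructs a PL $4$-manifold as its total space. Fixing $d=4$, the whole problem reduces to exhibiting a coloured surface $(\mathscr{X},\omega)$ in $S^4$, with $\mathscr{X}$ transversally immersed, whose total space is PL homeomorphic to the prescribed $\mathscr{M}$.

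I would first fix a smooth handle decomposition of $\mathscr{M}$ (equivalently a generic Morse function), which exists for every compact smooth $4$-manifold, and record it as Kirby-calculus data, so that a branched-covering construction can operate on it handle by handle. Feeding this into the precursor theorem of Montesinos already realizes $\mathscr{M}$ as a simple $4$-fold branched cover of $S^4$, but over a surface whose singular set may contain, besides transverse double points, finitely many cusp points. This supplies the raw coloured surface $(\mathscr{X}_0,\omega_0)$, and the genuine content of the refinement is to trade these cusps for ordinary double points without changing either the degree or the total space.

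The heart of the argument, and the step I expect to be the main obstacle, is a completeness statement for a fixed finite list of \emph{covering moves}: local modifications of the pair $(\mathscr{X},\omega)$ --- stabilizations that introduce a trivially coloured sheet, band and finger moves that slide sheets of $\mathscr{X}$ across one another, and moves realizing the relations in the presentation of $\pi_1(S^4\setminus\mathscr{X})$ --- each of which leaves the PL homeomorphism type of the total space unchanged. One then has to show that a cusp can be removed, at the cost of creating only transverse double points, by a prescribed sequence of these moves that remains compatible with the transposition condition on meridians and keeps the degree equal to four. Proving that the chosen moves genuinely suffice to cancel every cusp, and carefully tracking the $S_4$-colouring through each move so that no spurious extra sheet survives, is the delicate part of the proof.

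Once all cusp points have been traded away, the resulting $\mathscr{X}$ is a transversally immersed collection of surfaces, i.e. a knotted surface in the sense defined above, and $p\colon\mathscr{M}\to S^4$ is the desired $4$-fold PL branched cover. I would close by noting the sharpness of the statement: the transverse double points cannot in general be removed so as to make $\mathscr{X}$ embedded, and the degree four cannot in general be lowered, so that neither feature of the conclusion is merely an artifact of the method.
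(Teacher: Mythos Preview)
Your outline is a faithful sketch of Piergallini's actual argument: start from Montesinos's realization of an arbitrary closed orientable PL $4$-manifold as a simple $4$-fold cover of $S^4$ branched over an immersed surface with nodes and cusps, encode everything via the monodromy $\omega\colon\pi_1(S^4\setminus\mathscr{X})\to S_4$, and then use a finite repertoire of covering moves to trade each cusp for a node while preserving the total space and the degree. The delicate point you flag---checking that the moves really suffice to cancel every cusp and that the $S_4$-colouring survives each step---is exactly where Piergallini's paper does its work.

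However, the paper under review does \emph{not} prove this statement at all. Theorem~2.2 is quoted as a black box from \cite{Pie1} and is invoked later only to obtain the inclusion $\mathit{Diff}(S^4)\subset\mathit{Diff}(S^4-\mathscr{X})$ in the proof of Lemma~3.2. There is no argument in the paper to compare your proposal against; the author simply cites Piergallini and moves on. So while your sketch is correct in spirit and would be an appropriate summary of the original source, it goes well beyond anything the present paper attempts.
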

%**********************************************************************************
Let $S^2$ be the 2-dimensional sphere.
By $S(k)$ we understand  a smooth  4-dimensional manifold corresponding to  a
 connected sum
%*******************************************************************
\begin{equation}\label{eq2.3}
S(k):=\underbrace{(S^2\times S^2)\#\dots\# (S^2\times S^2)}_{\hbox{$k$ ~times}}.
\end{equation}
%*******************************************************************
%*****************************************************************************************
\begin{theorem}\label{thm2.3}
{\bf ([Gompf 1984] \cite{Gom1})}
Let $\mathscr{M}$ and $\mathscr{M}'$ be two different smoothings of a topological
4-manifold $\mathscr{M}_{top}$. Then for sufficiently large $k$ there exists an 
orientation-preserving diffeomorphism:
%*******************************************************************
\begin{equation}\label{eq2.4}
\mathscr{M}\# S(k)\longrightarrow \mathscr{M}'\# S(k). 
\end{equation}
%*******************************************************************
\end{theorem}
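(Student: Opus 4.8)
The plan is to prove the equivalent statement that two homeomorphic smooth compact $4$-manifolds become orientation-preservingly diffeomorphic after connected sum with sufficiently many copies of $S^2\times S^2$; since $\mathscr{M}$ and $\mathscr{M}'$ are by hypothesis two smooth structures on the same topological manifold $\mathscr{M}_{top}$, there is an orientation-preserving homeomorphism $h\colon\mathscr{M}\to\mathscr{M}'$, and Theorem \ref{thm2.3} follows. First I would upgrade $h$ to a cobordism: the mapping cylinder of $h$, equivalently $\mathscr{M}\times[0,1]$ with its top face reglued by $h$, is a compact topological $5$-manifold $W$ with $\partial W=\mathscr{M}\sqcup(-\mathscr{M}')$ for which the inclusion of each end is a homotopy equivalence, so $W$ is a topological $h$-cobordism.

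The second step is to smooth $W$ rel its already smooth boundary. The obstruction lies in $H^4(W,\partial W;\mathbf{Z}/2)$, and it is killed because $\mathscr{M}$ and $\mathscr{M}'$ carry smoothings with equal Kirby--Siebenmann invariants (Kirby--Siebenmann invariants being homeomorphism invariants, and $S^2\times S^2$ having trivial one); by the Kirby--Siebenmann smoothing theory for $5$-manifolds one then obtains a smooth structure on $W$ extending those on the ends, after an initial stabilization if needed. For arbitrary $\pi_1(\mathscr{M})$ one may instead bypass this by invoking Kreck's modified surgery, which produces the stable diffeomorphism directly from the coincidence of the normal $1$-type bordism data. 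In either case one arrives at a genuine smooth $h$-cobordism $W^5$ between $\mathscr{M}$ and $\mathscr{M}'$.

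The heart of the argument, and the main obstacle, is Wall's stabilization trick applied to $W$. Choosing a handle decomposition of $W$ built on $\mathscr{M}$, standard handle trading lets one assume all handles have index $2$ or $3$, and the $h$-cobordism condition forces the boundary map between the middle chain groups to be (stably) an isomorphism, so the $2$- and $3$-handles cancel algebraically. In dimension $5$ the Whitney trick is unavailable to cancel them geometrically; Wall's observation is that connect-summing the level $4$-manifold with a copy of $S^2\times S^2$ introduces a hyperbolic pair of dual embedded $2$-spheres that supplies exactly the room needed to realize the requisite Whitney disks and push the cancellation through. Iterating, after forming $\#\,S(k)$ for a suitable $k$ all handles cancel, the stabilized $h$-cobordism becomes a product $\bigl(\mathscr{M}\#S(k)\bigr)\times[0,1]$, and restriction to the other end yields the desired orientation-preserving diffeomorphism $\mathscr{M}\#S(k)\to\mathscr{M}'\#S(k)$; orientation-preservation is automatic since $h$, the summands $S^2\times S^2$, and every handle move respect orientations. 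The delicate points are precisely the failure of the $5$-dimensional Whitney trick, circumvented by the $S^2\times S^2$ summands, and, to a lesser degree, the smoothing of $W$ for an arbitrary fundamental group.
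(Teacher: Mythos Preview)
The paper does not prove Theorem~\ref{thm2.3}; it is quoted in the preliminaries section as a result of Gompf and cited to \cite{Gom1} without argument. So there is no in-paper proof to compare your attempt against.

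That said, your sketch is a faithful outline of the standard proof in the literature. The mapping-cylinder $h$-cobordism, the Kirby--Siebenmann smoothing of the $5$-dimensional cobordism rel boundary, and then Wall's handle-trading argument with $S^2\times S^2$ stabilizations is exactly how Gompf proceeds (building on Wall's simply-connected version). Your awareness that the smoothing step and the Whitney-trick failure in dimension~$5$ are the two genuine obstacles, and that the $S^2\times S^2$ summands are what rescue the latter, is on target. One small comment: in the smoothing step you locate the obstruction in $H^4(W,\partial W;\mathbf{Z}/2)$; since $W$ deformation retracts to either boundary component, $H^4(W,\mathscr{M};\mathbf{Z}/2)=0$, and with a bit more care (or by quoting Kirby--Siebenmann directly for $5$-manifolds with smooth boundary) the rel-boundary obstruction vanishes without needing the preliminary stabilization you hedge with. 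Your alternative route via Kreck's modified surgery is also legitimate and is how the result is often packaged nowadays.
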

%************************************************************************************
%****************************************************************
\begin{definition}
Let $\mathscr{M}_0$ be the standard smoothing of a topological 4-manifold 
$\mathscr{M}_{top}$. We denote by  $\mathscr{M}_k$ a smoothing of 
$\mathscr{M}_{top}$, such that 
$\mathscr{M}_k\# S(k)\rightarrow \mathscr{M}_0\# S(k)$ 
is an orientation-preserving diffeomorphism given by formula (\ref{eq2.4}). 
\end{definition}
%************************************************************************
%****************************************************************
\begin{lemma}\label{lm2.6}
%The smoothing $\mathscr{M}_k$ exists for each value of $k\ge 0$. 
For each $k\ge 0$ there
exist  pairwise distinct smoothings  $\mathscr{M}_k$. 
\end{lemma}
%************************************************************************
\begin{proof}
Fix an integer $k\ge 0$ and consider the smooth connected sum 
 $\mathscr{M}_0\# S(k)$. 
The manifold $\mathscr{M}_k$ can be obtained from  $\mathscr{M}_0\# S(k)$
by  cutting off $S(k)$ along the sphere $S^3$ and 
glueing in a copy of $S^4$ endowed with the standard 
smooth structure. (Notice that $\mathscr{M}_k$ is different from  $\mathscr{M}_0$.)
The reader can verify, that  $\mathscr{M}_k$
is a smoothing of $\mathscr{M}_{top}$ and there exists Gompf's 
diffeomorphism
$\mathscr{M}_k\# S(k)\rightarrow \mathscr{M}_0\# S(k)$
given by formula  (\ref{eq2.4}). 
\end{proof}
%****************************************************************
\begin{corollary}\label{cor2.7}
The set of all smoothings of $\mathscr{M}_{top}$ have the structure of an abelian monoid under the operation
%*******************************************************************
\begin{equation}\label{eq2.5}
\mathscr{M}_{k_1}\oplus\mathscr{M}_{k_2}:=\mathscr{M}_{k_1+k_2}
\end{equation}
%*******************************************************************
with the neutral element $\mathscr{M}_0$. 
\end{corollary}
%************************************************************************
\begin{proof}
(i) Let us show that operation (\ref{eq2.5}) defines a semigroup. 
By lemma \ref{lm2.6},  the sum $\mathscr{M}_{k_1}\oplus\mathscr{M}_{k_2}$
is a smoothing of $\mathscr{M}_{top}$. In other words, the set of all smoothing 
of $\mathscr{M}_{top}$ is closed under the operation   (\ref{eq2.5}), i.e. such a set
is a semigroup. 

\smallskip
(ii) The semigroup of item (i) is abelian, since $\mathscr{M}_{k_1}\oplus\mathscr{M}_{k_2}=
\mathscr{M}_{k_2}\oplus\mathscr{M}_{k_1}$. 

\smallskip
(iii) It follows from (\ref{eq2.5}), that $\mathscr{M}_k\oplus\mathscr{M}_0=\mathscr{M}_k$.
Thus $\mathscr{M}_0$ is the neutral element of the semigroup.
In other words,  our semigroup is a monoid.

\medskip
Corollary \ref{cor2.7} is proved.
 \end{proof}
%*********************************************************************

%**************************************************************************
\section{Proofs}
%***************************************************************************
%**************************************************************************
\subsection{Proof of theorem \ref{thm1.3}}
%***************************************************************************
We shall split the proof in two  lemmas. 
%*********************************************************************
\begin{lemma}\label{lm3.1}
The $C^*$-algebra $\mathbb{E}_{\mathscr{M}}$ is an AF-algebra. 
\end{lemma}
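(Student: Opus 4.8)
The plan is to show that $\mathbb{E}_{\mathscr{M}}$, being the group $C^*$-algebra of the discrete group $\Gamma:=\mathit{Diff}(\mathscr{M})/\mathit{Diff}_0(\mathscr{M})$, is AF. The natural strategy is to prove that $\Gamma$ is a locally finite group — that is, every finitely generated subgroup of $\Gamma$ is finite — and to invoke the classical fact that the (reduced = full) group $C^*$-algebra of a locally finite group is AF. Indeed, if $\Gamma=\varinjlim \Gamma_i$ is a directed union of finite subgroups $\Gamma_i$, then $C^*(\Gamma)=\varinjlim C^*(\Gamma_i)$, and each $C^*(\Gamma_i)=\mathbf{C}[\Gamma_i]$ is a finite-dimensional (multi-matrix) $C^*$-algebra by Maschke's theorem and the Artin--Wedderburn decomposition; the inductive limit of finite-dimensional $C^*$-algebras under the induced $*$-homomorphisms is by definition an AF-algebra in the sense of (\ref{eq2.1}).

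First I would recall from Piergallini's theorem \ref{thm2.2} that $\mathscr{M}$ is a $4$-fold branched cover of $S^4$ over a knotted surface $\mathscr{X}$, which gives a finite combinatorial/PL model for $\mathscr{M}$; in this picture an orientation-preserving diffeomorphism is, up to isotopy, determined by a permutation of the finitely many sheets together with a finite amount of combinatorial data on the branch locus. Second, I would argue that the mapping class group $\Gamma$ of a compact smooth $4$-manifold is generated, up to $\mathit{Diff}_0$, by a discrete family of ``local'' diffeomorphisms each supported in a chart — e.g. the Gluck twists and the generalized Dehn twists along embedded $3$-spheres coming from the $S^2\times S^2$ summands in Gompf's stabilization \ref{thm2.3} — each of which has finite order in $\Gamma$. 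Third, I would check that any finite collection of such finite-order generators lies in a finite subgroup: this is where one uses that their supports can be made disjoint after isotopy (so they commute), or more robustly that the relevant bounded combinatorial invariant (the action on $H_2(\mathscr{M};\mathbf{Z})$ together with the finite branched-cover data) takes only finitely many values, forcing the subgroup they generate to be finite.

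The main obstacle is precisely this last point: controlling finitely generated subgroups of $\Gamma$. It is \emph{not} true for a general compact smooth $4$-manifold that $\mathit{Diff}(\mathscr{M})/\mathit{Diff}_0(\mathscr{M})$ is locally finite — the action on $H_2$ alone can produce a copy of an arithmetic group such as $O(3,19;\mathbf{Z})$ for a K3 surface, which is far from locally finite. So the argument must either restrict the class of $\mathscr{M}$ to those for which the Etesi construction is intended, or extract from Etesi's definition \cite[Section 2]{Ete1} the feature that makes the relevant group locally finite (for instance, passing to the subgroup acting trivially on homology, or using that Etesi works with the \emph{stable} diffeomorphism group after connected sum with $S(k)$ as in \ref{thm2.3}, \ref{rmk2.4}, where the mapping classes that survive are the ``exotic'' ones detected by torsion invariants). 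I would therefore frame the lemma as: identify the correct directed system of finite subgroups $\Gamma_1\hookrightarrow\Gamma_2\hookrightarrow\cdots$ exhausting $\Gamma$ (coming from finer and finer branched-cover/stabilization data), show each inclusion induces a unital embedding $\mathbf{C}[\Gamma_i]\hookrightarrow\mathbf{C}[\Gamma_{i+1}]$ of multi-matrix algebras, and conclude $\mathbb{E}_{\mathscr{M}}=C^*(\Gamma)=\varinjlim \mathbf{C}[\Gamma_i]$ is AF; the subsequent lemma presumably upgrades ``AF'' to ``stationary'' by showing this directed system can be taken constant.
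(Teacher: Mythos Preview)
Your proposal rests on showing that $\Gamma=\mathit{Diff}(\mathscr{M})/\mathit{Diff}_0(\mathscr{M})$ is locally finite, and you yourself identify the fatal obstruction: this is simply false for many compact smooth $4$-manifolds. The action on $H_2(\mathscr{M};\mathbf{Z})$ already produces infinite groups such as $O(3,19;\mathbf{Z})$ for a K3 surface, and the subsequent suggestions (passing to the Torelli subgroup, or to a stable mapping class group after $\#S(k)$) do not rescue local finiteness either --- those groups still contain elements of infinite order. So the directed union $\Gamma=\varinjlim\Gamma_i$ with $\Gamma_i$ finite that your argument needs does not exist in general, and the proof as outlined cannot be completed without a genuinely new idea.

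The paper does \emph{not} attempt to prove local finiteness. Instead it passes to the profinite completion $\widehat{\mathcal{G}}=\varprojlim\mathcal{G}/N$, where $N$ runs over finite-index normal subgroups. Since $\widehat{\mathcal{G}}$ is an inverse limit of finite groups $G_i$, and each $\mathbf{C}[G_i]$ is a multi-matrix algebra, the paper argues that $C^*(\widehat{\mathcal{G}})$ is AF. It then invokes the canonical homomorphism $\mathcal{G}\to\widehat{\mathcal{G}}$ to obtain a sequence of $C^*$-algebras $C^*(\mathcal{G})\to C^*(\widehat{\mathcal{G}})\to\mathscr{B}$ and cites Handelman's result on extensions of AF-algebras \cite[Lemma I.5(a)]{Han1} (together with \cite[Corollary 11.1.2]{B}) to conclude that $C^*(\mathcal{G})=\mathbb{E}_{\mathscr{M}}$ is AF. Thus the paper's mechanism is profinite completion plus an extension/ideal argument, not a direct-limit-of-finite-subgroups argument; your strategy, even where it overlaps in spirit (approximating by finite groups), approaches the problem from the opposite direction and runs into the counterexamples you noted.
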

%*********************************************************************
\begin{proof}
The  $\mathbb{E}_{\mathscr{M}}$ is an AF-algebra by definition, see Remark \ref{rmk1.2}.
The aim of Lemma \ref{lm3.1} is an explicit construction of  $\mathbb{E}_{\mathscr{M}}$
from the group $C^*$-algebra $C^*(G)$. 
%The lemma follows from an observation that the  $\mathbb{E}_{\mathscr{M}}$ 
%is a group $C^*$-algebra of a locally compact  group, see definition \ref{dfn1.1}. Then the $C^*$-algebra
%$\mathbb{E}_{\mathscr{M}}$ is an AF-algebra, see e.g. 
%[Blackadar 1986] \cite[Corollary 11.1.2]{B}. 
Such a construction in terms of a von Neumann algebra
related to  $\mathbb{E}_{\mathscr{M}}$ is due to G\'abor Etesi. 
%This fact was proved 
%independently by G\'abor Etesi  in terms of a von Neumann algebra
%related to  $\mathbb{E}_{\mathscr{M}}$. 
Namely, it  was shown that the von Neumann algebra 
is hyperfinite, see [Etesi 2017] \cite[Lemma 2.3]{Ete2}. 
Below  we adapt the proof 
to the case of the $C^*$-algebra  $\mathbb{E}_{\mathscr{M}}$.

\medskip
Let $\mathcal{G}:=\mathit{Diff}(\mathscr{M})/ \mathit{Diff}_0(\mathscr{M})$. 
Consider a profinite completion of the discrete group $\mathcal{G}$, i.e.

%*****************************************************************************************************
\begin{equation}\label{eq3.1}
\widehat{\mathcal{G}}:=\varprojlim \mathcal{G}/N,
\end{equation}
%************************************************************************************************
where $N$ ranges through the  open normal finite index subgroups of 
$\mathcal{G}$.  
Recall that if  $G$ is  a finite group,  then the  group algebra   $\mathbf{C}[G]$
has the form
%****************************************************************************
\begin{equation}\label{eq3.2}
\mathbf{C}[G]\cong M_{n_1}(\mathbf{C})\oplus\dots\oplus M_{n_h}(\mathbf{C}),
\end{equation}
%****************************************************************************
where $n_i$ are degrees of the irreducible representations of $G$ 
and $h$  is the total number of such representations [Serre 1967]
\cite[Proposition 10]{S}.   In view of (\ref{eq3.1}),  we have  
%************************************************************************
\begin{equation}\label{eq3.3}
\widehat{\mathcal{G}}\cong\varprojlim G_i,
\end{equation}
%*****************************************************************
where $G_i$ is a finite group.  Consider a group algebra  
%****************************************************************************
\begin{equation}\label{eq3.4}
\mathbf{C}[G_i]\cong M_{n_1}^{(i)}(\mathbf{C})\oplus\dots\oplus M_{n_h}^{(i)}(\mathbf{C})
\end{equation}
%****************************************************************************
corresponding to $G_i$.  Notice that the $\mathbf{C}[G_i]$ is  a finite-dimensional 
$C^*$-algebra. The inverse limit (\ref{eq3.3})  defines an 
ascending sequence of the finite-dimensional $C^*$-algebras of the form
%****************************************************************************
\begin{equation}\label{eq3.5}
\varprojlim M_{n_1}^{(i)}(\mathbf{C})\oplus\dots\oplus 
M_{n_h}^{(i)}(\mathbf{C}).  
\end{equation}
%****************************************************************************

The group $C^*$-algebra $C^*(\widehat{\mathcal{G}})$ of the
profinite group $\widehat{\mathcal{G}}$ is the norm closure of the 
group algebra $\mathbf{C}[\widehat{\mathcal{G}}]$ 
[Dixmier 1977]  \cite[Section 13.9]{D}.  One concludes from (\ref{eq3.5}) 
that the  $C^*(\widehat{\mathcal{G}})$  embeds into an AF-algebra. 

On the other hand, each non-trivial canonical homomorphism 
 $\mathcal{G}\to \widehat{\mathcal{G}}$ gives rise to an extension of the
 $C^*$-algebras:
 %****************************************************************************
\begin{equation}\label{eq3.6}
C^*(\mathcal{G})\to C^*(\widehat{\mathcal{G}})\to \mathscr{B}. 
 \end{equation}
%****************************************************************************
Since $C^*(\widehat{\mathcal{G}})$ embeds into an AF-algebra,  both $C^*(\mathcal{G})$
and $\mathscr{B}$ must have such a property [Handelman 1982] \cite[Lemma I.5(a)]{Han1}. 
It remains to recall the definition of $\mathbb{E}_{\mathscr{M}}$,
see Remark \ref{rmk1.2}. 
 Lemma \ref{lm3.1} is proved. 
\end{proof}
%********************************************************************
%*********************************************************************
\begin{lemma}\label{lm3.2}
The $\mathbb{E}_{\mathscr{M}}$ is a stationary AF-algebra. 
\end{lemma}
%*********************************************************************
\begin{proof}
Roughly speaking, this fact is a consequence of the Piergallini Theorem \cite{Pie1}
followed by the Handelman's Criterion \cite[Theorem II (iii)]{Han2}. 
Let us pass to the detailed argument.

\medskip
(i)  Let $S^4$ be the 4-dimensional sphere. 
By the Piergallini Theorem \ref{thm2.2}, there exists a 4-fold  covering map
$\mathscr{M}\to S^4$ branched at the points of a knotted surface
$\mathscr{X}$ defined  by an embedding:
%****************************************************************************
\begin{equation}\label{eq3.7}
\mathscr{X}\hookrightarrow S^4. 
 \end{equation}
%****************************************************************************
In view of the inclusion  $\mathit{Diff}(S^4)\subset \mathit{Diff}(S^4-\mathscr{X})$,
one gets from the map  $\mathscr{M}\to S^4$ an injective homomorphism of the $C^*$-algebras:
%****************************************************************************
\begin{equation}\label{eq3.8}
 \mathbb{E}_{S^4}\hookrightarrow
\mathbb{E}_{\mathscr{M}}.  
 \end{equation}
%****************************************************************************

\bigskip
(ii) Let us show   that $\mathbb{E}_{S^4}\cong \mathbf{C}$. Indeed, 
since $\mathit{Diff}(S^4)\cong \mathit{Diff}_0(S^4)$, 
the group $\mathit{Diff}(S^4)/ \mathit{Diff}_0(S^4)$
is trivial.  In particular,   the group $C^*$-algebra $\mathbb{E}_{S^4}$ is  commutative. 
The Gelfand Theorem says that $\mathbb{E}_{S^4}\cong C_0(X)$, where $C_0(X)$ 
is the $C^*$-algebra of continuous complex-valued functions on a locally compact 
Hausdorff space $X$, see Section 2.1.  But $X\cong\mathbf{pt}$ is a singleton and therefore
$C_0(\mathbf{pt})\cong \mathbf{C}$.  Thus one gets $\mathbb{E}_{S^4}\cong \mathbf{C}$. 
(Equivalently, the group $C^*$-algebra of trivial group is isomorphic to $\mathbf{C}$.)

\bigskip
(iii) The  AF-algebra  $\mathbb{E}_{S^4}$ is given by an ascending 
sequence (\ref{eq2.1}) of the form: 
%***********************************************************
\begin{equation}\label{eq3.9}
\mathbf{C}\buildrel\rm \mathbf{1}\over\longrightarrow \mathbf{C}
   \buildrel\rm \mathbf{1}\over\longrightarrow\dots,
\end{equation}
%**************************************************************
where $\mathbf{1}$ is the identity homomorphism. 
It follows from (\ref{eq3.9}) that the $\mathbb{E}_{S^4}$
is a stationary AF-algebra. 

\bigskip
(iv)  Since $K_0(\mathbf{C})\cong\mathbf{Z}$, we conclude that the dimension group 
of the AF-algebra $\mathbb{E}_{S^4}$ is isomorphic to 
$(\mathbf{Z}, \mathbf{Z}^+)$, where $\mathbf{Z}^+$ is the semi-group 
of positive integers. It is easy to see, that the Handelman triple corresponding to
 the stationary AF-algebra $\mathbb{E}_{S^4}$  has the form $(\mathbf{Z}, [\mathbf{Z}], \mathbf{Q})$.

 \bigskip
 (v)  On the other hand, the map  (\ref{eq3.8}) induces an inclusion of the abelian groups:
 %*******************************************************************************************
 \begin{equation}\label{eq3.10}
K_0(\mathbb{E}_{S^4})\subset  K_0(\mathbb{E}_{\mathscr{M}}). 
\end{equation}
%****************************************************************************************
Moreover, if $\tau$ is the canonical trace on the AF-algebra $\mathbb{E}_{\mathscr{M}}$,
one gets from (\ref{eq3.10}) the following  inclusion of the additive groups  of the real line:
 %*******************************************************************************************
 \begin{equation}\label{eq3.11}
\mathbf{Z}\subset  \tau_*(K_0(\mathbb{E}_{\mathscr{M}})). 
\end{equation}
%****************************************************************************************

\bigskip
(vi) Since $\mathbf{Z}$ is a ring, the group inclusion (\ref{eq3.11}) can 
be extended to such of the rings. But the only finite degree extension 
of the ring $\mathbf{Z}$ coincides (up to a scaling constant) 
with an order $\Lambda$ in the number field $K=\Lambda\otimes\mathbf{Q}$. 
We conclude  that 
 %*******************************************************************************************
 \begin{equation}\label{eq3.12}
\tau_*(K_0(\mathbb{E}_{\mathscr{M}}))\subset K, 
\end{equation}
%****************************************************************************************
where $K$ is a real number field. 

\bigskip
(vii) To finish the proof of lemma \ref{lm3.2}, it remains to apply the result of [Handelman 1981] \cite[Theorem II (iii)]{Han2}
saying  that  condition  (\ref{eq3.12}) is equivalent to the AF-algebra $\mathbb{E}_{\mathscr{M}}$ 
to be of a stationary type. 

\bigskip
Lemma \ref{lm3.2} is proved.
\end{proof}
%********************************************************************

\smallskip
%*****************************************************************************************
\begin{remark}\label{rmk3.3}
The Etesi $C^*$-algebra $\mathbb{E}_{\mathscr{M}}$  is simple, i.e. has only trivial two-sided ideals. 
This fact follows from lemma \ref{lm3.2} and strict positivity of the matrix $A$
corresponding to the stationary AF-algebra.   
\end{remark}
%************************************************************************************

\smallskip
Returning to the proof of theorem \ref{thm1.3},  we apply lemmas \ref{lm3.1} and \ref{lm3.2}.
Theorem \ref{thm1.3} follows.

%**************************************************************************
\subsection{Proof of corollary \ref{cor1.4}}
%***************************************************************************
We split the proof in a series of lemmas. 
%*********************************************************************
\begin{lemma}\label{lm3.4}
The Etesi $C^*$-algebras satisfy an isomorphism:
 %*******************************************************************************************
 \begin{equation}\label{eq3.13}
\mathbb{E}_{\mathscr{M}_1\#\mathscr{M}_2}\cong \mathbb{E}_{\mathscr{M}_1}\otimes \mathbb{E}_{\mathscr{M}_2},
\end{equation}
%****************************************************************************************
where  $\#$   is the connected sum of manifolds 
and  $\otimes$ is the tensor product of $C^*$-algebras. 
 \end{lemma}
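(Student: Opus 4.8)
The plan is to reduce the claimed isomorphism to a standard property of group $C^*$-algebras, namely that $C^*(G_1 \times G_2) \cong C^*(G_1) \otimes C^*(G_2)$ for locally compact groups (with the appropriate choice of tensor norm, which for the AF-algebras in question is unambiguous since such algebras are nuclear). By Definition \ref{dfn1.1}, $\mathbb{E}_{\mathscr{M}} = C^*(\mathit{Diff}(\mathscr{M})/\mathit{Diff}_0(\mathscr{M}))$, so it suffices to establish a group isomorphism
\begin{equation}\label{eq:groupiso}
\frac{\mathit{Diff}(\mathscr{M}_1 \# \mathscr{M}_2)}{\mathit{Diff}_0(\mathscr{M}_1 \# \mathscr{M}_2)} \cong \frac{\mathit{Diff}(\mathscr{M}_1)}{\mathit{Diff}_0(\mathscr{M}_1)} \times \frac{\mathit{Diff}(\mathscr{M}_2)}{\mathit{Diff}_0(\mathscr{M}_2)},
\end{equation}
and then apply the tensor-product formula together with nuclearity of AF-algebras (Theorem \ref{thm1.3}) so that no ambiguity in the completion of the algebraic tensor product arises.

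First I would recall the geometric picture of the connected sum: $\mathscr{M}_1 \# \mathscr{M}_2$ is formed by removing an open $4$-ball from each $\mathscr{M}_i$ and gluing along the resulting boundary $3$-spheres. A diffeomorphism of $\mathscr{M}_1 \# \mathscr{M}_2$ can, after an isotopy, be arranged to preserve the separating $3$-sphere $S^3 \subset \mathscr{M}_1 \# \mathscr{M}_2$ (this uses that any embedded $3$-sphere in the connected sum is, up to isotopy, the standard one — an application of uniqueness of connected sum decomposition in this setting), hence restricts to diffeomorphisms of $\mathscr{M}_1 \setminus B^4$ and $\mathscr{M}_2 \setminus B^4$ fixing the boundary. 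Since $\mathit{Diff}(S^4) \cong \mathit{Diff}_0(S^4)$ (already used in part (ii) of the proof of Lemma \ref{lm3.2}), the ball we removed and its complementary behaviour contribute nothing to $\pi_0$, so capping the balls back in identifies $\pi_0 \mathit{Diff}(\mathscr{M}_i \setminus B^4, \partial)$ with $\pi_0 \mathit{Diff}(\mathscr{M}_i) = \mathit{Diff}(\mathscr{M}_i)/\mathit{Diff}_0(\mathscr{M}_i)$. Assembling the two pieces yields the product decomposition \eqref{eq:groupiso}; the map in one direction is restriction-to-the-two-sides, and in the other direction one glues a pair of diffeomorphisms (each supported away from the gluing sphere), the two being mutually inverse on $\pi_0$.

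Having \eqref{eq:groupiso}, the proof concludes by combining three facts: (a) for discrete groups $G_1, G_2$ one has a natural isomorphism $\mathbf{C}[G_1 \times G_2] \cong \mathbf{C}[G_1] \otimes_{\mathrm{alg}} \mathbf{C}[G_2]$, which extends to the $C^*$-completions to give $C^*(G_1 \times G_2) \cong C^*(G_1) \otimes_{\max} C^*(G_2)$; (b) by Theorem \ref{thm1.3} each $\mathbb{E}_{\mathscr{M}_i}$ is an AF-algebra, hence nuclear, so $\otimes_{\max} = \otimes_{\min} =: \otimes$ and the tensor product is unambiguous; (c) applying (a) and (b) to $G_i = \mathit{Diff}(\mathscr{M}_i)/\mathit{Diff}_0(\mathscr{M}_i)$ gives exactly \eqref{eq3.13}.

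The main obstacle I anticipate is step two: justifying rigorously that an arbitrary element of $\mathit{Diff}(\mathscr{M}_1 \# \mathscr{M}_2)$ is isotopic to one respecting the connected-sum splitting, i.e. that the separating $3$-sphere is unique up to isotopy and that the orientation-preserving diffeomorphism group does not mix the two summands or introduce extra components from "sliding" one summand around the other. In the $4$-dimensional smooth category this is genuinely delicate and may require invoking the hypotheses already in force in the paper (compactness of $\mathscr{M}$, and the structural results used in Section 2.3, e.g. Gompf's stabilization theorem \ref{thm2.3} and the stable picture it provides) rather than an elementary isotopy argument; alternatively one can sidestep it by defining the relevant diffeomorphism groups up to the stable equivalence in which \ref{thm2.3} operates, where the decomposition \eqref{eq:groupiso} becomes transparent. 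All other steps are formal consequences of the functoriality of $C^*(-)$ and nuclearity of AF-algebras.
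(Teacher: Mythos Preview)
Your proposal is correct and follows essentially the same route as the paper: reduce to the group isomorphism $\pi_0\mathit{Diff}(\mathscr{M}_1\#\mathscr{M}_2)\cong \pi_0\mathit{Diff}(\mathscr{M}_1)\times\pi_0\mathit{Diff}(\mathscr{M}_2)$, then invoke $\mathbf{C}[G_1\times G_2]\cong\mathbf{C}[G_1]\otimes\mathbf{C}[G_2]$ and nuclearity of AF-algebras to pass to the $C^*$-completion. The paper simply asserts the group decomposition with ``it is not hard to see,'' whereas you supply a geometric sketch and rightly flag it as the delicate point; otherwise the arguments coincide.
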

%*********************************************************************
\begin{proof}
We let $\mathcal{G}:=\mathit{Diff}(\mathscr{M}_1\#\mathscr{M}_2)/ \mathit{Diff}_0(\mathscr{M}_1\#\mathscr{M}_2)$.
It is not hard to see, that  
%*******************************************************************************************
 \begin{equation}\label{eq3.14}
\mathcal{G}=\mathcal{G}_1\times\mathcal{G}_2,
\end{equation}
%****************************************************************************************
where 
 $\mathcal{G}_1=\mathit{Diff}(\mathscr{M}_1)/ \mathit{Diff}_0(\mathscr{M}_1)$
 and $\mathcal{G}_2=\mathit{Diff}(\mathscr{M}_2)/ \mathit{Diff}_0(\mathscr{M}_2)$. 
It is well known that the group ring $\mathbf{C}[\mathcal{G}]$ of the product 
(\ref{eq3.14}) is given by the formula: 
%*******************************************************************************************
 \begin{equation}\label{eq3.15}
\mathbf{C}[\mathcal{G}]\cong \mathbf{C}[\mathcal{G}_1]\otimes\mathbf{C}[\mathcal{G}_2]. 
\end{equation}
%****************************************************************************************
Since the $\mathbb{E}_{\mathscr{M}}$  is a nuclear $C^*$-algebra, the norm closure of 
a self-adjoint representation of (\ref{eq3.15}) defines an isomorphism
$\mathbb{E}_{\mathscr{M}_1\#\mathscr{M}_2}\cong \mathbb{E}_{\mathscr{M}_1}\otimes \mathbb{E}_{\mathscr{M}_2}$. 
Lemma \ref{lm3.4} is proved. 
\end{proof}
%*********************************************************************

%*********************************************************************
\begin{lemma}\label{lm3.5}
The Etesi $C^*$-algebra of the 4-manifold $S^2\times S^2$ is 
given by the formula:
%*******************************************************************************************
 \begin{equation}\label{3.16}
\mathbb{E}_{S^2\times S^2}\cong M_4(\mathbf{C}).
\end{equation}
%****************************************************************************************
\end{lemma}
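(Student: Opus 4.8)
The plan is to compute the mapping class group $\mathcal{G} = \mathit{Diff}(S^2\times S^2)/\mathit{Diff}_0(S^2\times S^2)$ explicitly and then apply the finite group algebra formula~(\ref{eq3.2}). By a classical result (going back to work on diffeomorphisms of $S^2\times S^2$, e.g. due to Wall and refined by others), the group of orientation-preserving diffeomorphisms modulo isotopy is $\mathbf{Z}/2\mathbf{Z}$, generated by the map swapping the two $S^2$ factors; this is reflected in the action on $H_2(S^2\times S^2;\mathbf{Z})\cong\mathbf{Z}^2$ with intersection form the hyperbolic form, whose isometry group preserving orientation is exactly $\mathbf{Z}/2\mathbf{Z}$, and the surjection $\mathcal{G}\to \mathrm{Aut}(H_2)$ is an isomorphism here. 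So the first step is to state $\mathcal{G}\cong\mathbf{Z}/2\mathbf{Z}$, citing the appropriate source ([Gompf 1984] \cite{Gom1} or the standard references on 4-manifold mapping class groups).

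Next I would feed this into Definition~\ref{dfn1.1}: since $\mathbf{Z}/2\mathbf{Z}$ is already finite (hence compact), its group $C^*$-algebra is just the group algebra $\mathbf{C}[\mathbf{Z}/2\mathbf{Z}]$. The irreducible representations of $\mathbf{Z}/2\mathbf{Z}$ are the two one-dimensional characters (trivial and sign), so by~(\ref{eq3.2}) we get
\begin{equation*}
\mathbb{E}_{S^2\times S^2}\cong\mathbf{C}[\mathbf{Z}/2\mathbf{Z}]\cong M_1(\mathbf{C})\oplus M_1(\mathbf{C})\cong\mathbf{C}\oplus\mathbf{C}.
\end{equation*}
But this is visibly \emph{not} $M_2(\mathbf{C})$ — the two $C^*$-algebras have different $K_0$ ($\mathbf{Z}^2$ versus $\mathbf{Z}$) and different dimension groups. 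This is the main obstacle: the naive computation contradicts the claimed formula. The resolution the author seems to intend is that one must not take the full group $C^*$-algebra of the discrete group $\mathcal{G}$, but rather the completion relevant to the Etesi construction — namely the one passing through the profinite completion $\widehat{\mathcal{G}}$ as in the proof of Lemma~\ref{lm3.1}, together with the gluing/tensor structure forced by Lemma~\ref{lm3.4}. Concretely, $S^2\times S^2$ is the building block $S(1)$, and under connected sum $\mathbb{E}_{S(k)}\cong\mathbb{E}_{S^2\times S^2}^{\otimes k}$; matching this against Gompf's stabilization Theorem~\ref{thm2.3} and the requirement that $\mathbb{E}_{\mathscr{M}}$ be a \emph{stationary} AF-algebra (Theorem~\ref{thm1.3}) with a single primitive matrix $A$ pins down $A=(2)$, i.e. $K_0(\mathbb{E}_{S^2\times S^2})\cong\mathbf{Z}$ with the doubling endomorphism and order unit $2$, which is precisely the dimension data of $M_2(\mathbf{C})$ viewed as the stationary limit $\mathbf{C}\xrightarrow{2}\mathbf{C}\xrightarrow{2}\cdots$.

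Therefore the proof I would write proceeds as follows. First, identify $\mathcal{G}\cong\mathbf{Z}/2\mathbf{Z}$ and hence $\widehat{\mathcal{G}}\cong\mathbf{Z}/2\mathbf{Z}$, so the relevant finite-dimensional approximants in~(\ref{eq3.5}) are all copies of $\mathbf{C}\oplus\mathbf{C}$. Second, determine the connecting maps $\varphi_i$: the covering-space inclusion $\mathit{Diff}(S^4)\subset\mathit{Diff}(S^4-\mathscr{X})$ together with the tensor-product structure of Lemma~\ref{lm3.4} forces the partial multiplicity matrix to be the $1\times 1$ matrix $(2)$ rather than the identity, because passing to the branched-cover picture doubles the relevant multiplicities (each summand of $\mathbf{C}\oplus\mathbf{C}$ embeds into the next stage with multiplicity $2$ after collapsing to the diagonal/scale constraint). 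Third, conclude that the dimension group of $\mathbb{E}_{S^2\times S^2}$ is $(\mathbf{Z},\mathbf{Z}^+)$ with order unit $2$ and the stationary matrix $A=(2)$ — which by the Elliott classification of AF-algebras (\cite[Theorem 3.5.2]{N}) is exactly the scaled dimension group of $M_2(\mathbf{C})$. The delicate point, and the one I expect to require the most care, is justifying step two: that the Etesi construction really produces the matrix $(2)$ and not $(1)$, since the plain group algebra of the finite group $\mathbf{Z}/2\mathbf{Z}$ alone gives $\mathbf{C}\oplus\mathbf{C}\not\cong M_2(\mathbf{C})$, and the extra factor of $2$ has to come from the branched-covering/scaling normalization in the definition of $\mathbb{E}_{\mathscr{M}}$ together with compatibility with Lemma~\ref{lm3.4} under iterated connected sums.
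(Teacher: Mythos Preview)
Your approach diverges from the paper's and contains a genuine gap that you yourself flag but do not close. You compute the group $C^*$-algebra of $\mathcal{G}$ directly from Definition~\ref{dfn1.1} and obtain $\mathbf{C}\oplus\mathbf{C}$. (Incidentally, the orientation-preserving mapping class group of $S^2\times S^2$ surjects onto the full isometry group $\{\pm I,\pm\sigma\}\cong\mathbf{Z}/2\times\mathbf{Z}/2$ of the hyperbolic form, not just $\mathbf{Z}/2$: both the factor swap and the double antipodal map $(x,y)\mapsto(-x,-y)$ are orientation-preserving. Either way the group algebra is a sum of copies of $\mathbf{C}$.) You then correctly observe that this contradicts the claimed $M_2(\mathbf{C})$, and your proposed repair --- that the construction somehow forces the partial multiplicity matrix to be $(2)$ rather than the identity --- is not an argument but a desideratum. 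Nothing in Definition~\ref{dfn1.1}, in the profinite limit (\ref{eq3.5}), or in Lemma~\ref{lm3.4} produces the non-commutativity needed to convert a commutative group algebra into $M_2(\mathbf{C})$; appealing to compatibility with downstream results (Theorem~\ref{thm2.3}, Corollary~\ref{cor3.6}) to pin down the answer is circular, since those results presuppose the lemma you are trying to prove.

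For comparison, the paper's proof never computes $\mathcal{G}$ or $\mathbf{C}[\mathcal{G}]$ at all. It instead invokes Massey's $2$-fold branched cover $(S^2\times S^2)/H\cong S^4$ to obtain a homomorphism $K_0(\mathbb{E}_{S^2\times S^2})\to K_0(\mathbb{E}_{S^4})\cong\mathbf{Z}$, asserts that the kernel is $\mathbf{Z}/2\mathbf{Z}$ so that $K_0(\mathbb{E}_{S^2\times S^2})\cong\mathbf{Z}$, and then argues from stationarity (Theorem~\ref{thm1.3}) that the multiplicity matrix must be $I_n$ with $n=2$ forced by the degree of the cover, yielding the constant sequence $M_2(\mathbf{C})\to M_2(\mathbf{C})\to\cdots$. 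This route sidesteps the contradiction your direct computation exposes precisely because it works entirely at the level of $K_0$ and the covering, rather than writing down the group algebra explicitly.
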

%*********************************************************************
\begin{proof}
(i) It follows from Piergallini’s Theorem \ref{thm2.2},  that the 4-manifold $S^2\times S^2$ is a 4-fold cover 
of the 4-sphere $S^4$  as one can see by factoring this covering geometrically as:
%*******************************************************************************************
 \begin{equation}\label{eq3.17}
S^2 \times S^2 \cong \mathbf{C}P^1 \times \mathbf{C}P^1 \to \mathbf{C}P^2 \to S^4.
\end{equation}
%****************************************************************************************
The covering map (\ref{eq3.17}) induces a homomorphism of the $C^*$-algebras
$\mathbb{E}_{S^2\times S^2}\to \mathbb{E}_{S^4}$ and a homomorphism of 
the corresponding abelian groups:
%*******************************************************************************************
 \begin{equation}\label{eq3.18}
K_0(\mathbb{E}_{S^2\times S^2})\to K_0(\mathbb{E}_{S^4}).
\end{equation}
%****************************************************************************************
From (\ref{eq3.9}) one gets  $K_0(\mathbb{E}_{S^4})\cong\mathbf{Z}$. In view of (\ref{eq3.17}),  
the kernel of the map (\ref{eq3.18}) is isomorphic to $\mathbf{Z}/4\mathbf{Z}$. 
Thus one gets an isomorphism:
%*******************************************************************************************
 \begin{equation}\label{eq3.19}
K_0(\mathbb{E}_{S^2\times S^2})\cong\mathbf{Z}.
\end{equation}
%****************************************************************************************

\medskip
(ii) By theorem \ref{thm1.3},   the $\mathbb{E}_{S^2\times S^2}$ is a stationary
AF-algebra.  If $A$ is the corresponding matrix, then by (\ref{eq3.19}) 
the eigenvalues of $A$ must be rational and equal to each other.
In other words,   the  AF-algebra   $\mathbb{E}_{S^2\times S^2}$
 corresponds  to an ascending  sequence (\ref{eq2.1}) of the form: 
%***********************************************************
\begin{equation}\label{eq3.20}
M_2(\mathbf{C})\otimes M_2(\mathbf{C})\buildrel\rm 
\mathbf{1}
\over\longrightarrow M_2(\mathbf{C})\otimes M_2(\mathbf{C})
   \buildrel\rm 
   \mathbf{1},
   \over\longrightarrow\dots
\end{equation}
%**************************************************************
where $\mathbf{1}=\left(\begin{smallmatrix} 1 & 0\cr 0 & 1\end{smallmatrix}\right)\otimes
\left(\begin{smallmatrix} 1 & 0\cr 0 & 1\end{smallmatrix}\right)$. 
Clearly, the inductive limit (\ref{eq3.20}) corresponds to the $C^*$-algebra $M_2(\mathbf{C})\otimes M_2(\mathbf{C})$.
We conclude that   $\mathbb{E}_{S^2\times S^2}\cong M_2(\mathbf{C})\otimes M_2(\mathbf{C})\cong  M_4(\mathbf{C})$. 
Lemma \ref{lm3.5} is proved. 
\end{proof}
%*********************************************************************

%***********************************************************************
\begin{corollary}\label{cor3.6}
The Etesi $C^*$-algebra $\mathbb{E}_{\mathscr{M}\# S(k)}$ is Morita equivalent to $\mathbb{E}_{\mathscr{M}}$. 
\end{corollary}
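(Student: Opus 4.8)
The plan is to combine Lemma~\ref{lm3.4} with Lemma~\ref{lm3.5} and then use the fact that tensoring a $C^*$-algebra by a matrix algebra does not change its Morita equivalence class. First I would apply Lemma~\ref{lm3.4} inductively to the connected sum $\mathscr{M}\#S(k)=\mathscr{M}\#(S^2\times S^2)\#\dots\#(S^2\times S^2)$ ($k$ copies), obtaining
\begin{equation}\label{eq3.21}
\mathbb{E}_{\mathscr{M}\#S(k)}\cong \mathbb{E}_{\mathscr{M}}\otimes\mathbb{E}_{S^2\times S^2}\otimes\dots\otimes\mathbb{E}_{S^2\times S^2},
\end{equation}
with $k$ tensor factors $\mathbb{E}_{S^2\times S^2}$. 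Then I would substitute $\mathbb{E}_{S^2\times S^2}\cong M_2(\mathbf{C})$ from Lemma~\ref{lm3.5}, and use $M_2(\mathbf{C})\otimes\dots\otimes M_2(\mathbf{C})\cong M_{2^k}(\mathbf{C})$ to conclude
\begin{equation}\label{eq3.22}
\mathbb{E}_{\mathscr{M}\#S(k)}\cong \mathbb{E}_{\mathscr{M}}\otimes M_{2^k}(\mathbf{C})\cong M_{2^k}(\mathbb{E}_{\mathscr{M}}).
\end{equation}

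The final step is to invoke the standard fact that a $C^*$-algebra $\mathscr{A}$ and the matrix amplification $M_n(\mathscr{A})$ are stably isomorphic, hence Morita equivalent; at the level of invariants this is the statement that $M_n(\mathscr{A})$ has the same scaled dimension group as $\mathscr{A}$ up to rescaling the order unit, which is exactly the Morita invariance recorded in Section~2.2 (the pair $(K_0,K_0^+)$ is a complete Morita invariant of an AF-algebra). Applying this with $\mathscr{A}=\mathbb{E}_{\mathscr{M}}$ and $n=2^k$ gives that $\mathbb{E}_{\mathscr{M}\#S(k)}$ is Morita equivalent to $\mathbb{E}_{\mathscr{M}}$, which is the assertion of the corollary.

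I expect the only delicate point to be the passage from the algebraic tensor identity \eqref{eq3.15} of group rings to the $C^*$-tensor identity used in \eqref{eq3.21}, but this has already been dealt with inside the proof of Lemma~\ref{lm3.4} using nuclearity of $\mathbb{E}_{\mathscr{M}}$, so for the corollary it may be cited directly. One should also note that associativity of the (spatial) tensor product and the isomorphism $M_m(\mathbf{C})\otimes M_n(\mathbf{C})\cong M_{mn}(\mathbf{C})$ are entirely standard, so no real obstacle remains; the argument is essentially a bookkeeping assembly of the two preceding lemmas together with Morita invariance of matrix amplification.
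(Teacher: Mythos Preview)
Your proposal is correct and follows essentially the same route as the paper: both apply Lemma~\ref{lm3.4} together with Lemma~\ref{lm3.5} to obtain $\mathbb{E}_{\mathscr{M}\# S(k)}\cong \mathbb{E}_{\mathscr{M}}\otimes M_{2^k}(\mathbf{C})$ and then invoke Morita equivalence of a $C^*$-algebra with its matrix amplification. The only cosmetic difference is that the paper first computes $\mathbb{E}_{S(k)}\cong M_{2^k}(\mathbf{C})$ separately before tensoring with $\mathbb{E}_{\mathscr{M}}$, whereas you distribute the tensor product in one step; the content is identical.
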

%**********************************************************************
\begin{proof}
Recall that the 4-manifold $S(k)=(S^2\times S^2)\#\dots\# (S^2\times S^2)$
is a connected sum of the $k$ copies of $S^2\times S^2$. 
From lemma \ref{lm3.5} and formula (\ref{eq3.13}) one gets an isomorphism:
%*******************************************************************************************
 \begin{equation}\label{eq3.21}
 \mathbb{E}_{S(k)}\cong \underbrace{M_4(\mathbf{C})\otimes\dots\otimes M_4(\mathbf{C})}_{\hbox{$k$ ~times}}
 \cong M_{4^k}(\mathbf{C}). 
 \end{equation}
%****************************************************************************************
 
 If $\mathscr{M}$ is a smooth 4-manifold, then by lemma \ref{lm3.4} and formula (\ref{eq3.21}) 
 one obtains  an isomorphism: 
%*******************************************************************************************
 \begin{equation}\label{eq3.22}
 \mathbb{E}_{\mathscr{M}\# S(k)}
\cong \mathbb{E}_{\mathscr{M}}\otimes M_{4^k}(\mathbf{C}). 
 \end{equation}
%****************************************************************************************

In other words, the  $C^*$-algebras $\mathbb{E}_{\mathscr{M}}$ and $\mathbb{E}_{\mathscr{M}\# S(k)}$
are Morita equivalent.
Corollary \ref{cor3.6} is proved. 
\end{proof}
%******************************************************************

%*********************************************************************
\begin{lemma}\label{lm3.7}
%The  topological type of manifold $\mathscr{M}$ is determined by  
%the Handelman triple $(\Lambda, [\mathfrak{m}], K)$. 
The triple $(\Lambda, [\mathfrak{m}], K)$
is a topological invariant of  manifold $\mathscr{M}$.  
\end{lemma}
%*********************************************************************
\begin{proof}
Our proof is based on 
 Gompf's Stable Diffeomorphism Theorem \ref{thm2.3} 
and corollary \ref{cor3.6}.   We shall proceed in two steps.
 
 \medskip
 (i) Recall that by Theorem \ref{thm2.3}  for any two  smoothings 
 $\mathscr{M}$ and  $\mathscr{M}'$ of a topological manifold $\mathscr{M}_{top}$
 one can find an integer  $k\ge 0$,   such that $\mathscr{M}\# S(k)$ and $\mathscr{M}'\# S(k)$
 are diffeomorphic. The corresponding $C^*$-algebras are  isomorphic: 
 %*******************************************************************************************
 \begin{equation}\label{eq3.23}
 \mathbb{E}_{\mathscr{M}\# S(k)}
\cong \mathbb{E}_{\mathscr{M}'\# S(k)}. 
 \end{equation}
%****************************************************************************************

\medskip
In view of corollary \ref{cor3.6} and formula (\ref{eq3.23}), the $C^*$-algebra   $\mathbb{E}_{\mathscr{M}}$ is 
Morita equivalent to  $\mathbb{E}_{\mathscr{M}\# S(k)}$ and the $C^*$-algebra 
$\mathbb{E}_{\mathscr{M}\# S(k)}$ is Morita equivalent to $\mathbb{E}_{\mathscr{M}'}$. 
Therefore the $C^*$-algebras $\mathbb{E}_{\mathscr{M}}$ and $\mathbb{E}_{\mathscr{M}'}$
are Morita equivalent  by the transitivity property.

 Thus the Morita equivalence class of the Etesi $C^*$-algebra 
$\mathbb{E}_{\mathscr{M}}$ consists of  all 4-dimensional manifolds which are homeomorphic but 
not diffeomorphic to each other. 

\medskip
(ii)  Recall that the $\mathbb{E}_{\mathscr{M}}$ is a stationary AF-algebra (lemma \ref{lm3.2}). 
The Morita equivalence classes of such AF-algebras are described by the Handelman triples $(\Lambda, [\mathfrak{m}], K)$,
see [Handelman 1981] \cite{Han2},  \cite[Theorem 3.5.4]{N} or Section 2.2. Comparing this fact with the result 
of item (i), we conclude that the $(\Lambda, [\mathfrak{m}], K)$ is an invariant of the topological type 
of the manifold $\mathscr{M}$. 

\medskip
Lemma \ref{lm3.7} is proved. 
\end{proof}
%*********************************************************************
%*****************************************************************************************
\begin{remark}\label{rmk3.8}
Lemma \ref{lm3.7} says that the topological type $\mathscr{M}_{top}$
of manifold $\mathscr{M}$ depends on the Morita equivalence class of the $C^*$-algebra
 $\mathbb{E}_{\mathscr{M}}$.  Likewise,  
  distinct smoothings  of  $\mathscr{M}_{top}$  are indexed  by  the isomorphism 
 classes  inside  given Morita equivalence class
 of $\mathbb{E}_{\mathscr{M}}$. 
\end{remark}
%************************************************************************************

%*********************************************************************
\begin{lemma}\label{lm3.8}
Let $K$ be a number field and let $Mi~(K)$ be the Minkowski group, see definition \ref{dfn2.2}. 
Then: 

\bigskip
(i) the map $K\to Mi~(K)$ is a covariant functor which maps isomorphic number 
fields $K$ to the isomorphic torsion abelian groups $Mi~(K)$; 

\medskip
(ii) $Mi ~(K)\cong Br(K)$.  

\end{lemma}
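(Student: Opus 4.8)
The plan is to prove the two parts of Lemma \ref{lm3.8} separately, treating part (i) as essentially a bookkeeping exercise and concentrating the real work on the identification $Mi~(K)\cong Br(K)$ in part (ii).

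\medskip
\emph{Step 1: functoriality (part (i)).} First I would recall from Definition \ref{dfn2.2} and Remark \ref{rmk2.1} that $Mi~(K)$ is the torsion abelian group generated by the set $\mathscr{Y}=?_n(\tau_*(\Sigma(\mathbb{E}_{\mathscr{M}})))\subset\mathbf{Q}/\mathbf{Z}$. The key point is that the construction of $\mathscr{Y}$ depends only on the $\mathbf{Z}$-module $\mathfrak{m}=\tau_*(K_0)$ inside $K$, hence only on the isomorphism class of the pair $(K,\mathfrak{m})$, since the $n$-dimensional Minkowski question-mark function $?_n$ is defined canonically from the field structure (it conjugates the algebraic points of degree $n$ on the diagonal to $\mathbf{Q}/\mathbf{Z}$ by properties (iii)--(iv) of Remark \ref{rmk2.1}). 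An isomorphism of number fields $K\xrightarrow{\sim} K'$ carries the diagonal simplex to the diagonal simplex, algebraic points of degree $n$ to algebraic points of degree $n$, and thus $\mathscr{Y}$ to $\mathscr{Y}'$; passing to the generated torsion abelian groups gives $Mi~(K)\cong Mi~(K')$. Covariance (composition of field isomorphisms maps to composition of group isomorphisms, identity to identity) is then immediate. I would write this out in a few lines.

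\medskip
\emph{Step 2: the isomorphism $Mi~(K)\cong Br(K)$ (part (ii)).} This is the main obstacle. The strategy is to compare both sides with a common intermediary, namely the subgroup of $\mathbf{Q}/\mathbf{Z}$ attached to $K$ by local class field theory together with the Albert--Brauer--Hasse--Noether exact sequence
\begin{equation}\label{eqABHN}
0\longrightarrow Br(K)\longrightarrow \bigoplus_{v}Br(K_v)\xrightarrow{\ \sum\mathrm{inv}_v\ }\mathbf{Q}/\mathbf{Z}\longrightarrow 0,
\end{equation}
where $v$ runs over all places of the number field $K$ and each local invariant $\mathrm{inv}_v$ lands in (a subgroup of) $\mathbf{Q}/\mathbf{Z}$. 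So $Br(K)$ is the kernel of a sum-of-local-invariants map into $\mathbf{Q}/\mathbf{Z}$, and in particular it is a torsion abelian group built out of the dyadic-complement rationals $\mathbf{Q}/\mathbf{Z}$ exactly as in \eqref{eq2.2}. The plan is then to show that the set $\mathscr{Y}\subset\mathbf{Q}/\mathbf{Z}$ produced by the question-mark function $?_n$ acting on $\tau_*(\Sigma(\mathbb{E}_{\mathscr{M}}))$ is precisely the image of the local invariants associated with the completions of $K$ attached to the Perron--Frobenius data of the matrix $A$; this uses the fact that the endomorphism ring $\Lambda$ and the ideal class $[\mathfrak{m}]$ determine the ramification of $K$, and that the scale $\Sigma$ records exactly the ``bounded'' part $\mathfrak{m}\cap[0,1]$ which maps onto $\mathbf{Q}/\mathbf{Z}$ under $?_n$ by property (iii). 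Matching the group generated by $\mathscr{Y}$ with the kernel in \eqref{eqABHN} — i.e.\ checking that the Minkowski relations among the generators coincide with the reciprocity relation $\sum_v\mathrm{inv}_v=0$ — yields $Mi~(K)\cong Br(K)$.

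\medskip
\emph{Where the difficulty lies.} The delicate point is the last matching: a priori $Mi~(K)$ is defined purely combinatorially from $?_n$ and the dimension data of the AF-algebra, whereas $Br(K)$ is defined arithmetically. Bridging the two requires identifying the generators of $\mathscr{Y}$ with the family $\{\mathrm{inv}_v\}_v$ and identifying the defining relations, and this is where one must invoke Handelman's description \cite{Han2} of the triple $(\Lambda,[\mathfrak{m}],K)$ to pin down which places $v$ occur and with what local degrees. I expect that once the correspondence ``generator of $\mathscr{Y}$ $\leftrightarrow$ local invariant at $v$'' is set up, both groups are seen to be the kernel of the same map $\bigoplus_v(\mathbf{Q}/\mathbf{Z})\to\mathbf{Q}/\mathbf{Z}$, so the isomorphism is forced; the routine verifications (that the sum is finite, that $?_n$ is a bijection on the relevant sets, that torsion is preserved) I would relegate to citations of \cite{Min1,Pan1} for the question-mark function and to standard class field theory for \eqref{eqABHN}.
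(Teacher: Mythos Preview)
Your treatment of part~(i) is essentially the same as the paper's: both argue that the normalized trace makes $\tau_*(\Sigma(\mathbb{A}))\subset[0,1]$ depend only on the Morita class (equivalently on the Handelman triple, hence on $K$), so that $K\mapsto Mi~(K)$ is well-defined and carries field isomorphisms to group isomorphisms.

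For part~(ii), however, your route is genuinely different from the paper's. The paper makes no use of the Albert--Brauer--Hasse--Noether sequence, local invariants, or any matching of generators of $\mathscr{Y}$ with places of $K$. Instead, it simply records that $K\mapsto Br(K)$ is \emph{also} a covariant functor from number fields to torsion abelian groups, compares this with part~(i), asserts that therefore there exists a natural transformation between the two functors, and concludes that this natural transformation yields $Mi~(K)\cong Br(K)$. In other words, the paper's argument is a two-line categorical claim rather than an arithmetic construction. Your approach, if the matching of $\mathscr{Y}$-generators with local invariants and of Minkowski relations with the reciprocity relation $\sum_v\mathrm{inv}_v=0$ could actually be carried out, would produce an explicit isomorphism and explain its arithmetic content; the paper's approach is much shorter but leaves both the existence and the invertibility of the claimed natural transformation unexplained. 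The ``delicate point'' you flag --- pinning down which places occur and why the relations agree --- is simply absent from the paper's proof, so you should not expect to find confirmation of that step there.
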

%*********************************************************************
\begin{proof}
(i) Let $K$ be a number field corresponding to the Handelman triple  $(\Lambda, [\mathfrak{m}], K)$. 
Denote by $[\mathbb{A}]$ the Morita equivalence class of  stationary AF-algebras defined by 
the triple $(\Lambda, [\mathfrak{m}], K)$. 
Since $\tau$ is the canonical trace (i.e. $\tau_*(u)=1$ for the order unit $u\in K_0^+(\mathbb{A})$),
we conclude that $\tau_*(\Sigma (\mathbb{A}))\subset [0,1]$ does not depend on  $\mathbb{A}\in [\mathbb{A}]$. 
Thus from (\ref{eq2.2}) one gets a correctly defined map  $K\to \mathscr{Y}:=Mi~(K)$. 
It can be verified directly that if $K'$ is a real embedding of $K$, then $Mi~(K')\cong Mi~(K)$. 
Item (i) is proved. 

\bigskip
(ii) Let $Br(K)$ be the Brauer group of a number field $K$. 
It is well known,  that the map $K\to Br(K)$ is a covariant functor which maps isomorphic number 
fields $K$ to the isomorphic torsion abelian groups $Br(K)$. Comparing with item (i) we
conclude that there exists a natural transformation between these two functors, see e.g. \cite[Section 2.3]{N}. 
In particular, such a transformation implies an isomorphism of the abelian groups $Br(K)$ and $Mi~(K)$. 
Item (ii) and lemma \ref{lm3.8} are proved.   
\end{proof}
%*********************************************************************
%******************************************************************************************
\begin{remark}
Lemma \ref{lm3.8} can be viewed as part of a  correspondence between the algebraic K-theory 
(e.g. the Milnor-Voevodsky K-theory) and  the Galois cohomology. 
This subject is outside the scope of present note. 
\end{remark}
%******************************************************************************************
%*********************************************************************
\begin{corollary}\label{cor3.9}
Distinct smoothings   of $\mathscr{M}$ are classified  by  
the elements of the Brauer group $Br(K)$.  
In particular,   all  smoothings of  $\mathscr{M}$ 
form a torsion abelian group with the summation operation defined by formula (\ref{eq2.5})
and the neutral element $\mathscr{M}_0$,
see Section 2.3.    
\end{corollary}
%*********************************************************************
\begin{proof}
(i) Recall that  from (\ref{eq2.2}) we have:
%*******************************************************************************************
 \begin{equation}\label{eq3.24}
 \mathscr{Y}=?_n(\tau_*(\Sigma(\mathbb{E}_{\mathscr{M}}))).
\end{equation}
%****************************************************************************************
Since the Minkowski function $?_n(x)$ is monotone, formula (\ref{eq3.24}) 
defines a bijective correspondence between  generators of the torsion abelian groups 
$Mi~(K)\cong Br~(K)$ and the scale  $\Sigma(\mathbb{E}_{\mathscr{M}})$ of the Etesi
$C^*$-algebra $\mathbb{E}_{\mathscr{M}}$.

\bigskip
(ii)  Recall that  the scale $\Sigma(\mathbb{E}_{\mathscr{M}})$ is a generating
subset  of   $K_0^+(\mathbb{E}_{\mathscr{M}})$, 
i.e. for each $a\in K_0^+(\mathbb{E}_{\mathscr{M}})$ 
there exist $a_1,\dots, a_r\in \Sigma(\mathbb{E}_{\mathscr{M}})$ such that 
$a=a_1+\dots+a_r$.  We extend the correspondence of item  (i) to 
a bijective map between the elements of the Brauer group
$Br(K)$ and the elements of positive cone $K_0^+(\mathbb{E}_{\mathscr{M}})$. 

\bigskip
(iii)  It is known that the pair $(K_0(\mathbb{E}_{\mathscr{M}}), K_0^+(\mathbb{E}_{\mathscr{M}}))$ 
is invariant of the Morita equivalence class of the AF-algebra  $\mathbb{E}_{\mathscr{M}}$,
while the triple $(K_0(\mathbb{E}_{\mathscr{M}}), K_0^+(\mathbb{E}_{\mathscr{M}}), \Sigma(\mathbb{E}_{\mathscr{M}}))$
is invariant of the isomorphism class of   $\mathbb{E}_{\mathscr{M}}$, see Section 2.2. 
Moreover, the scale  can be written as  $\Sigma(\mathbb{E}_{\mathscr{M}})=\{a\in K_0^+(\mathbb{E}_{\mathscr{M}}) ~|~0\le a\le u\}$,
where   $u\in K_0^+(\mathbb{E}_{\mathscr{M}})$ is fixed. Thus running through all $u\in K_0^+(\mathbb{E}_{\mathscr{M}})$
one gets all possible scales $\Sigma(\mathbb{E}_{\mathscr{M}})$ and vice versa. In other words, 
the elements  $u\in K_0^+(\mathbb{E}_{\mathscr{M}})$ parametrize isomorphism classes of 
$\mathbb{E}_{\mathscr{M}}$ within its Morita equivalence class.

\bigskip
(iv) To finish the proof of corollary \ref{cor3.9}, it remains to recall remark \ref{rmk3.8}.  
Indeed, combing \ref{rmk3.8} with item (iii) we conclude that 
different smooth structures on  $\mathscr{M}$ are in bijection with
the elements of $K_0^+(\mathbb{E}_{\mathscr{M}})$.  
Moreover,  the $K_0^+(\mathbb{E}_{\mathscr{M}})$
has  structure of a torsion abelian group  isomorphic to the Brauer 
group $Br(K)$, see item (ii).  Corollary \ref{cor3.9} is proved. 
\end{proof}
%*********************************************************************

\bigskip
Corollary \ref{cor1.4} follows from lemma \ref{lm3.7} and corollary \ref{cor3.9}.

%**************************************************************************
\section{Remarks}
%***************************************************************************
%******************************************************************************************
\begin{remark}\label{rmk4.1}
The $\mathbb{E}_{\mathscr{M}}$  can be defined via  unitary representation of 
the group  $\mathit{Diff}(\mathscr{M})$ by the bounded linear operators on a Hilbert space 
[Etesi 2016] \cite[Section 2]{Ete1}.  Since $\mathit{Diff}(\mathscr{M})$    is a Fr\'echet manifold,
the standard construction of  the group $C^*$-algebra  fails in general 
 [Blackadar 1986] \cite[Section 11.1]{B};  hence  definition \ref{dfn1.1}. 
However, the two definitions are equivalent from the standpoint 
of representation theory. We refer the reader to [Etesi 2021] \cite{Ete2}
for a  remarkable new topological invariant of the 4-manifolds  given by 
 the Murray - von Neumann coupling constant of  $\mathbb{E}_{\mathscr{M}}$.  
\end{remark}
%******************************************************************************************
%*****************************************************************************************
\begin{remark}\label{rmk4.2}
Let $\mathscr{M}$ be the 4-dimensional sphere $S^4$. 
Since  $\mathbb{E}_{S^4}\cong\mathbf{C}$ is a finite-dimensional $C^*$-algebra,  
corollary \ref{cor1.4} fails.  On the other hand, it is  known that  $K_0(\mathbb{E}_{S^4})\cong\mathbf{Z}$ 
and $K\cong\mathbf{Q}$.  Denote by  $\mathscr{S}$  a subgroup of the Brauer group $Br(\mathbf{Q})$
consisting of  all  smoothings  of  $S^4$. 
Assuming an analog of  \ref{cor1.4},  one can recast 
 the  smooth Poincar\'e Conjecture as follows: 
Is the group $\mathscr{S}$  trivial? 
\end{remark}
%************************************************************************************

 %%%%%%%%%%%%%%
\section*{Data availability}
%%%%%%%%%%%%%%%
  
  Data sharing not applicable to this article as no datasets were generated or analyzed during the current study.
   
%%%%%%%%%%%%%%
\section*{Conflict of interest}
%%%%%%%%%%%%%%%
On behalf of all co-authors, the corresponding author states that there is no conflict of interest.
  
  %%%%%%%%%%%%%%

%%%%%%%%%%%%%%
\section*{Funding declaration}
%%%%%%%%%%%%%%%
The author was partly supported by the NSF-CBMS grant 2430454.

\bibliographystyle{amsplain}

%**********************************************************

\end{document}